\newtheorem{defin}{Definition}[section]
\newtheorem{thm}{Theorem}[section]
\newtheorem{atheo}{Theorem}
\newtheorem{remark}{Remark}[section]
\newtheorem{corollary}{Corollary}[section]
\newcommand{\be}{\begin{equation}}
\newcommand{\ee}{\end{equation}}
\newcommand{\baa}{\begin{array}}
\newcommand{\eaa}{\end{array}}
\newcommand{\ba}{\begin{eqnarray}}
\newcommand{\ea}{\end{eqnarray}}
\newcommand{\R}{\mathbb{R}}
\newcommand{\up}{\overline{\phi}}
\newcommand{\low}{\underline{\phi}}
\newcommand{\ds}{\mathbb{\displaystyle}}
\title[Asymptotic speed selection]{Speed Selection for Reaction Diffusion Equations in Heterogeneous Environments}
\author{Mohammad El Smaily $^{1}$, Chunhua Ou $^{2}$}
\address{$^{1}$ Department of Mathematics and Statistics\\University of Northern British Columbia,\\
Prince George, BC, Canada\\
$^{2}$ Department of Mathematics and Statistics\\Memorial University,\\ St. John's, NL, Canada}
\email{M. El Smaily: mohammad.elsmaily@unbc.ca, Chunhua Ou: ou@mun.ca}
\date{}
\begin{document}
\subjclass[2010]{35K55, 35Q92, 37N25}
\keywords{propagation speed, reaction-advection-diffusion, linear/nonlinear selection}

\begin{abstract}
{Reaction-advection-diffusion equations, in periodic settings and with general type nonlinearities, admit a threshold known as the minimal speed of propagation. The minimal speed does not have an accessible formula when the nonlinearity is not of KPP type, for instance. The  question becomes whether the minimal speed can be obtained through a linearization procedure or not. In this paper, we derive selection criteria for the minimal speed: a key feature of the nonlinear selection is unveiled. Moreover, we use  upper/lower solution techniques in order to derive practical criteria determining the minimal speed in the presence of advection and a general type nonlinearilty.}

\end{abstract}

\maketitle

\section{Introduction and setting}

This paper is  concerned with the speeds of propagating wavefronts  for reaction-advection-diffusion equations in periodic media. The general form of such equations is 
\begin{equation}
u_t =\Delta u+q(x)\cdot \nabla u+f(x,u), \label{1}
\end{equation}
where $t\in\mathbb{R},~x \in\R^{N}$ and $N\geq1$ is the space
dimension. In order to describe the problem's setting briefly (mainly the advection term $q(x)$ and the reaction $f(x,u)$),  let
$L_1,\cdots,L_N$ be $N$ positive real numbers.  We   state the definitions of a periodicity cell and an $L$-periodic field as follows. 
 The set $$\mathcal{C}=\{x\in\R^{N}
\text{ such that } x_{1}\in(0,L_1),\ldots,x_{N}\in(0,L_N)\}$$ is called the
periodicity cell of $\R^{N}.$ A field
$w:\R^{N}\rightarrow\mathbb{R}^N$ is said to be $L$-periodic
 if $w(x_1+k_1,\cdots,x_N+k_N)=w(x_1,\cdots,x_N)$, almost everywhere in $ \R^{N}$  and for all
$\displaystyle{k=(k_1,\cdots,k_N)\in L_1\mathbb{Z}\times\cdots\times L_N\mathbb{Z}}.$

\smallskip
 In this work, the advection $q(x)=(q_1(x),\cdots,q_N(x))$  is a
vector field
satisfying
\begin{equation}\label{cq}
    \left\{
      \begin{array}{ll}
 
q \in C^{1,\alpha}(\R^{N}), \text{ for some }\alpha>0,&\vspace{7 pt}\\
        q\hbox{ is $L-$periodic with respect to }x, & \vspace{7 pt}\\
        \nabla\cdot q\equiv0\hbox{ in } \R^{N}.
      \end{array}
    \right.
\end{equation}
The nonlinearity  $f=f(x,u)$, in \eqref{1}, is a
function defined in $\R^{N}\times[0,1],$ such that
\begin{equation}\label{f1}
        f\geq0, ~f\hbox{ is $L$-periodic with respect to } x,~ f\in C^{1,\alpha}(\R^{N}\times[0,1]),
        \end{equation}
        and
        \begin{equation}\label{f2}
\left\{\begin{array}{l}
     \forall x\in\R^{N},\quad \displaystyle{f(x,0)=f(x,1)=0 } \hbox{,}\vspace{10 pt} \\
        \exists \rho\in(0,1)\text{ such that }\forall \,x\in \R^{N},\displaystyle{\forall\, 1-\rho \leq s\leq
s'\leq1,}
~~\displaystyle{f(x,s)\geq f(x,s') } \hbox{,}\vspace{10 pt} \\
        \forall s\in(0,1), \exists\, x\in \R^{N}\hbox{ such that }f(x,s)>0  \hbox{.}
      \end{array}
    \right.
\end{equation}
An example of such nonlinearity is \[f(x,u)=b(x)\left[u(1-u)(1+a(x)u)\right]\quad u\in [0,1]~,x\in \R^N,\tag{E}\] where $a,b:\R^N\rightarrow\R$ can be taken as a smooth, periodic functions, with $a(x)\geq0$ and $b(x)\geq0$ for all $x\in \R^N.$ A more particular family of such nonlinearities is the well KPP/FKPP type (after Fisher–Kolmogorov–Petrovsky–Piskunov), which we describe in \eqref{linear}, below.

Under the above assumptions, we are interested in the minimal speed (or the spreading speed) of a specific kind of solutions, known as pulsating traveling fronts. In Definition \ref{define.pulsating}, we recall the definitions of both the minimal speed and a pulsating traveling wave/front, as introduced in Berestycki, Hamel \cite{BH1} and Xin \cite{Xin4}. 
\begin{defin}[\cite{BH1}, \cite{Xin4}]\label{define.pulsating}
Let $e=(e^1,\cdots,e^N)$ be an arbitrarily  unit direction in
$\mathbb{R}^N.$ A function $u=u(t,x)$ is called a pulsating
traveling front propagating in the direction of $e$, with an
effective speed $c\neq0,$ if $u$ is a classical solution of
\begin{eqnarray}\label{front}
      \begin{array}{ll}
u_t =\Delta u+q(x)\cdot \nabla u+f(x,u),~
t\in\mathbb{R},~x \in\R^{N},
\vspace{6 pt}\\
 \displaystyle{\forall k\in L_1\mathbb{Z}\times\cdots\times L_N\mathbb{Z},~ \forall(t,x)\in\mathbb{R}\times{\R^{N}}},
~\displaystyle{u(t+\frac{k\cdot e}{c},x)=u(t,x+k)}  \hbox{,} \vspace{6 pt}\\
         \displaystyle{\forall t\in\mathbb{R}, \lim_{x\cdot e\rightarrow-\infty}u(t,x)=0\hbox{ and } \lim_{x\cdot e\rightarrow +\infty} u(t,x)=1}  \hbox{,}
        \vspace{6 pt} \\
          0\leq u\leq1,
      \end{array}
\end{eqnarray}
 where the above limits hold locally in $t$ and uniformly in the
directions of $\mathbb{R}^N$ that are orthogonal to $e$.
\end{defin}
\noindent Note that Definition \ref{define.pulsating} can be rephrased upon using a traveling wave variable $s:=x\cdot e+ct$
and plugging the ansatz $$u(t,x):=\phi(x\cdot e+ct,x)=\phi(s,x)$$ in \eqref{front}. From this, we learn that a pulsating traveling wave $\phi$  is $L$-periodic in $x$ (namely, from the second line in \eqref{front}) and  satisfies the equation
\begin{equation}\label{ceq}
\Delta_{x}\phi+\phi_{ss}+2e\cdot\nabla_{x}\phi_{s}
+q\cdot\nabla_{x}\phi+(q\cdot e-c)\phi_s+f(x,\phi(s,x))=0,\end{equation}
 for all $(s,x)\in \R\times \R^{N}.$  Furthermore,  defining $L_{c}$ to be the operator
\begin{equation}\label{degen_elliptic}
L_c\phi:=\Delta_{x}\phi+\phi_{ss}+2e\cdot\nabla_{x}\phi_{s}
+q\cdot\nabla_{x}\phi+(q\cdot e-c)\phi_s~~\hbox{in}~\mathbb{R}\times \R^{N},
\end{equation}
we obtain that a pulsating
traveling front $\phi(s,x)$ satisfies the wave profile equation
\begin{equation} L_{c}\phi+f(x,\phi)=0, \label{wave}
\end{equation}
subject to the limiting boundary conditions
\begin{equation}
\lim_{s\rightarrow-\infty}\phi(s,x)=0 \hbox{ and } \lim_{s\rightarrow +\infty} \phi(s,x)=1\text{ uniformly in $x \in\R^{N}$.} \label{bc}
\end{equation}

Existence of pulsating traveling fronts, for this class of equations, is well studied and the above discussion is only a brief introduction, which is by no means exhaustive. We recall the most relevant existence results in Theorem \ref{Berest_Hamel_ZFK}, below. We refer the reader to \cite{BH1}, and the references therein, for complete details.

\begin{atheo}[Berestycki, Hamel \cite{BH1}]\label{Berest_Hamel_ZFK}
Let $e$ be any unit vector in $\mathbb{R}^{N}.$ Assume that $q$ satisfies (\ref{cq})
 and let $f$ be a nonlinearity
satisfying \eqref{f1} and \eqref{f2}. Then,
there exists $\displaystyle{c^{*}>0}$ such that
the problem (\ref{wave})-(\ref{bc}) has no solution $(c,\phi)$ if
$c<\displaystyle{c^{*}}$ while, for each
$c\geq\displaystyle{c^{*}},$ it has a pulsating traveling front solution
$(c,\phi)$ such that $\phi$ is increasing in $s.$
\end{atheo}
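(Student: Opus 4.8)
\medskip
\noindent\emph{Proof proposal.} The plan is to establish this statement --- the basic existence theory for pulsating fronts in the monostable periodic setting --- by a truncation-and-limit scheme following the approach of \cite{BH1}; the one genuine complication is that the operator $L_{c}$ of \eqref{degen_elliptic} is only \emph{degenerate} elliptic, its second-order coefficient matrix in the variables $(s,x)$ having determinant $1-|e|^{2}=0$. To restore uniform ellipticity, regularize by
\[
L_{c}^{\eps}\phi:=L_{c}\phi+\eps\,\phi_{ss},
\]
and, for $a>0$, pose the regularized profile equation $L_{c}^{\eps}\phi+f(x,\phi)=0$ on the slab $\Sigma_{a}=(-a,a)\times\R^{N}$, $L$-periodic in $x$, with Dirichlet data $\phi(-a,\cdot)\equiv0$, $\phi(a,\cdot)\equiv1$, and a normalization such as $\max_{x\in\R^{N}}\phi(0,x)=\tfrac12$ that pins down the otherwise free speed $c$. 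For each fixed $\eps,a$ I would obtain a solution $(c_{a,\eps},\phi_{a,\eps})$ by a Leray--Schauder degree argument in a H\"older space: homotope to an $x$-independent model problem whose solution set is explicit, maintaining a priori bounds along the homotopy so that the degree stays well defined and nonzero. Comparison with the constants $0$ and $1$ (a supersolution since $f(x,1)=0$, a subsolution since $f\ge0$) gives $0\le\phi_{a,\eps}\le1$, and the sliding method --- using that $f(x,\cdot)$ is nonincreasing near $1$, second line of \eqref{f2} --- gives $\partial_{s}\phi_{a,\eps}>0$.

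The technical heart is a pair of speed bounds $0<\underline c\le c_{a,\eps}\le\overline c<\infty$ that are \emph{uniform in both $\eps$ and $a$}. The lower bound, and with it $c^{*}>0$, comes from $f\ge0$ together with $\nabla\!\cdot q\equiv0$: a comparison argument rules out $c\le0$, and an exponential supersolution built from the linearization of \eqref{1} at $u=0$ --- governed by a periodic principal eigenvalue that is finite and controlled through \eqref{cq}, \eqref{f1} --- keeps $c_{a,\eps}$ above a fixed positive constant. For the upper bound one uses the third line of \eqref{f2}: since $f$ is positive at \emph{some} point for every level $s\in(0,1)$, one can build a compactly supported subsolution which, together with the normalization at $s=0$, would be overtaken if $c$ were too large, capping the speed. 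Interior elliptic Schauder estimates then yield $C^{2,\alpha}_{\mathrm{loc}}$ bounds on $\phi_{a,\eps}$, uniform in $\eps$ and $a$.

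Now pass to the limit $\eps\to0$ and $a\to\infty$ (diagonally): extract $c_{a,\eps}\to c^{*}\in[\underline c,\overline c]$ and $\phi_{a,\eps}\to\phi$ in $C^{2}_{\mathrm{loc}}(\R\times\R^{N})$, so that $\phi$ solves \eqref{wave} at $c=c^{*}$ with $0\le\phi\le1$, $\partial_{s}\phi\ge0$, and $\max_{x}\phi(0,x)=\tfrac12$. Monotonicity and $L$-periodicity force $\phi(\pm\infty,x)$ to exist and to be $x$-periodic steady states of $\Delta_{x}\psi+q\cdot\nabla_{x}\psi+f(x,\psi)=0$ with values in $[0,1]$; the strong maximum principle together with the third line of \eqref{f2}, which forbids any intermediate level from being a steady state, forces $\phi(-\infty,\cdot)\equiv0$ and $\phi(+\infty,\cdot)\equiv1$, i.e. \eqref{bc}. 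Applying the strong maximum principle to $\partial_{s}\phi$ upgrades the monotonicity to $\partial_{s}\phi>0$, producing the front at the minimal speed $c^{*}$.

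Finally, the full half-line and the threshold property. For $c<c^{*}$: any solution $(c,\phi)$ of \eqref{wave}--\eqref{bc}, translated so that $\max_{x}\phi(0,x)=\tfrac12$, can be compared with the slab solutions $\phi_{a,\eps}$ by the sliding method, which forces $c\ge c_{a,\eps}$ for all large $a$ and small $\eps$, hence $c\ge c^{*}$ --- a contradiction; equivalently $c^{*}$ is, by construction, the infimum of admissible speeds and is attained by the previous step. For every $c>c^{*}$: run a parallel truncated construction with $c$ held fixed, or, more transparently, the sub/supersolution method in the moving frame with supersolution $\equiv1$ and a subsolution glued from a small multiple of the exponentially decaying solution of the linearization at $u=0$ with decay rate compatible with $c$, identifying the limits at $s=\pm\infty$ as before. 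The main obstacle throughout is precisely the $(\eps,a)$-\emph{uniformity} of the speed bounds above: building the right explicit super/subsolutions out of the linearization at $0$ and out of the positivity clause of \eqref{f2}, and checking that the resulting estimates are independent of the regularization and truncation so that they both survive the limit and deliver $c^{*}>0$.
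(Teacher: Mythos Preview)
The paper does not give its own proof of Theorem~A: the result is quoted from Berestycki--Hamel \cite{BH1} as background, with no argument supplied beyond the citation. Your outline is a faithful high-level sketch of precisely the \cite{BH1} strategy (regularization $L_c^\eps$ to restore ellipticity, truncation to slabs with a normalization fixing $c$, degree/continuation for existence, sliding for monotonicity, uniform-in-$(\eps,a)$ speed bounds, and passage to the limit), so in that sense you are reproducing the very proof the paper defers to rather than offering an alternative. For the purposes of this paper no proof was required here; a one-line reference to \cite{BH1} suffices.
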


\section*{Linearization, Important facts and Statement of the problem}

\noindent Theorem \ref{Berest_Hamel_ZFK} applies in a general periodic framework and   provides the existence of fronts and a threshold $c^*$. However,  only variational type formulas for $c^{*}$ are available, when the nonlinearity $f$ in \eqref{front} satisfies  the general conditions \eqref{f1} and \eqref{f2}.  For instance, a min-max formula for $c^*$ (that holds under conditions \eqref{f1} and \eqref{f2} on $f$) is proved in Theorem 1.9 of  \cite{MINMAX}.

Let us recall now recall an attempt to estimate the speed $c^*$, given in Theorem \ref{Berest_Hamel_ZFK} above, when $f$ is differentiable with respect to $u$ at $u=0$. For convenience, we write $\eta(x)=\frac{\partial f}{\partial u}(x,0)$. The linearized version of (\ref{wave}), at $\phi=0$ (equivalently $u=0$),   reads
\begin{equation} L_{c}\phi+\eta(x)\phi=0. \label{linearwave}
\end{equation}
 Hamel \cite{H} introduced what we will call the \emph{linear} speed, and  denote by $c_0$, through the variational formula 

\begin{equation}\label{linearspeed}
    \displaystyle{c_0(e)=c_{0}^{q,f}(e)=\min_{\lambda>0}\frac{k(\lambda)}{\lambda}},
\end{equation}
where $\displaystyle{k(\lambda)=k_{e,q,\eta,\lambda}}$ is
the principal eigenvalue of the elliptic operator
$\displaystyle{L_{e,q,\eta,\lambda}}$  defined by
\begin{equation}\label{Leq}
\displaystyle{L_{e,q,\eta,\lambda}\Psi:=}\displaystyle{\Delta\Psi+2\lambda e\cdot
\nabla\Psi+q\cdot\nabla\Psi}\displaystyle{+[\lambda^2+\lambda
q\cdot e+\eta]\Psi},
\end{equation}
acting on the space
\begin{eqnarray*}
\begin{array}{ll}
E=&\left\{ \Psi \in C^2(\overline{\Omega}), \Psi\hbox{ is
$L$-periodic with respect to $x$}
\right\}.
\end{array}
\end{eqnarray*}
Note that the elliptic operator in \eqref{Leq} is not a self-adjoint due to the presence of the drift term $q$ in our problem. The principal eigenfunction is positive and unique up to multiplication by a constant. 

 A detailed study of the properties of $k(\lambda)$ is done in \cite{BH1} and \cite{BHN1}. In particular, \cite{BH1} shows that $\lambda\mapsto k(\lambda)$ is a convex function.  Note that a lower bound for $k(0)$ can be given by  \begin{equation}\label{lower}
 k(0)\geq \min_{\R^N}\eta(x)=\min_{\mathcal{C}}\eta(x).
\end{equation} The proof of lower bound \eqref{lower} is given  in the footnote, for the reader's convenience.\footnote{ For $\lambda=0$, we have the principal eigenfunction (denote by $\varphi$ and normalized by $\|\varphi\|_{L^2(\mathcal C)}=1$) satisfies  $\Delta\varphi+q\cdot \nabla\varphi+\eta(x)\varphi=k(0)\varphi,$ $\varphi>0$ and $L-$periodic. Multiplying by $\varphi$ and integrating by parts over the periodicity cell $\mathcal C$, and because $\nabla\cdot q=0$,  we get $-\int_C|\nabla \varphi|^2+\int_\mathcal{C}\eta(x)\varphi^2=k(0)$. This gives us a lower bound for $k(0)$.} This lower bound guarantees that $k(0)\geq0$, as our nonlinearity $f$ is nonnegative. If we further assume  that \begin{equation} k(0)>0 \label{k0},\footnote{ The lower bound \eqref{lower} shows that  the assumption \eqref{k0} holds automatically whenever $\min_{\mathcal C}\eta>0$}
\end{equation}
 we then obtain a unique  $\lambda =\bar \mu$ such that
\begin{equation}\label{var}
    \displaystyle{c_0(e)=\frac{k(\bar \mu)}{\bar \mu}}.
\end{equation}
Moreover, for $c>c_{0}$, the equation
\begin{equation}\label{mu2a}
  c\lambda=k(\lambda)
\end{equation}
then admits two solutions $\lambda=\mu_1(c)$ and $\lambda=\mu_2(c)$, with $\mu_1(c)<\mu_2(c)$. When $c=c_0$, we have $\mu_1(c)=\mu_2(c)=\bar \mu$. By appealing to the convexity
of the function $k(\lambda)$ again, we get that $\mu_1(c)$ is \emph{decreasing} in $c$ and $\mu_2(c)$ is \emph{increasing} in $c$.

\subsection*{The relation between $c_0$ and $c^*$}   We return now to the influence of the nonlinearity $f$ on the speeds $c_0$ and $c^*$.  To do this, we stop by  the particular type of KPP nonlinearities. We  say that $f$ is of KPP type if $f$ satisfies \eqref{f1}, \eqref{f2} and the 
 \emph{additional} KPP condition 
 \begin{equation}\label{linear}
   0<f(x,u)\le \eta(x) u, \text{ for all }u\in(0,1). 
 \end{equation}
Note that \eqref{linear} already assumes $\eta(x)>0$. Thus, when $f$ is of KPP type, we directly get $k(0)>0$ (see the lower bound \eqref{lower} of $k(0)$).  
 A major difference between the KPP class and a nonlinearity satisfying only \eqref{f1} and \eqref{f2} is the sublinearity at $u=0$ (i.e. \eqref{linear}). For example, $f(u)=u(1-u)(1+au)$ satisfies \eqref{f1} and \eqref{f2} but does not satisfy \eqref{linear}, when $a>2$, for instance. Also, a KPP nonlinearity must be positive everywhere in $\R^N\times(0,1).$ This need not be the case for the class (E), which we mention above (for e.g, take $a(x)\equiv 3$, $b(x)=\sin^2|x|$ and set $f(x,u):=b(x)u(1-u)(1+3u)$).

  In the particular case, where the nonlinearity $f$ satisfies  \eqref{f1} and \eqref{f2}, together with the KPP condition \eqref{linear}, Berestycki, Hamel and Nadirashvilli \cite{BHN1} proved  that the minimal speed $c^*$, in Theorem A, is exactly equal to $c_0$ in (\ref{linearspeed}) (also see \cite{H}). However, for a more general nonlinearity  $f$, which  satisfies conditions \eqref{f1} and \eqref{f2} only, it is still unknown how the minimal speed is determined (aside from variational formulas in \cite{MINMAX}, for example). From Theorem \ref{Berest_Hamel_ZFK} and the fact that a KPP type nonlinearity satisfies \eqref{f1} and \eqref{f2}, we can see  that $c^*\geq c_0$ holds always. The primary purpose of this paper is to  investigate the comparison of $c_0$ to $c^*$ further.  We prove that, when the minimal speed $c^*$ is greater than the linear speed $c_0$, the corresponding wave front (\emph{pushed front}) decays with a faster rate; this solves the conjecture in \cite[page 363]{H}. 
To speak about other goals of this work, we recall the following definition, which has been used in the literature (see \cite{Lucia_2004}, for instance).
\begin{defin}[Linear and nonlinear selection mechanisms]\label{selection} Under the assumptions of Theorem \ref{Berest_Hamel_ZFK}, we call the case $c^*=c_0$ the linear selection mechanism and the case $c^*>c_0$ the nonlinear selection mechanism.
\end{defin}
\noindent In this context, we will use the upper/lower solution method in order to provide an easy-to-use approach that determines whether the minimal speed is selected linearly or nonlinearly (see Definition \ref{selection} above). In the case of nonlinear selection, we will show a method that  leads to a lower or an upper bound estimate of the minimal speed. We show  our main results in sections \ref{resultats} and \ref{apps}. Section \ref{apps} serves as an application of the  theorems in Section \ref{resultats}.

\section{Pushed wavefront}\label{resultats}
For a given wavefront $\phi$, satisfying (\ref{wave}) with  $c>c_0$, a straightforward derivation of the characteristics of the linear part of the wave profile proposes that
either
\begin{equation}
\phi(s,x)\sim C_1 \Psi_{\mu_1} (x) e^{\mu_1(c)s}, \,\, C_1>0, \label{decay1a}
\end{equation}
or \begin{equation}
\phi(s,x)\sim C_2 \Psi_{\mu_2}(x)e^{\mu_2(c)s}, \,\, C_2>0 \label{decay2b}
\end{equation} as $s \to -\infty$, where $\Psi_{\mu_i} (x), i=1,2,$ is the eigenfunction corresponding to  the principal eigenvalue $k(\mu_i)$ defined in (\ref{Leq}). For a rigorous proof of this property, we refer the reader to \cite{H}.

Alternatively, when linearizing  the first equation of (\ref{front}) at $u=0$, we  obtain the linear partial differential equation
\begin{equation}
u_t =\Delta u+q(x)\cdot \nabla u+\eta(x)u,  \text{ where } \eta(x)=\partial_uf(x,0).  \label{linearflow}
\end{equation}
The above equation  defines a linear semiflow $M(u_0)=u(t,x,u_0)$, where $u_0$ is the initial data. Obviously, we have
\begin{equation}
M(\Psi_{\mu_i} (x) e^{\mu_i(c)x\cdot e})=\Psi_{\mu_i} (x) e^{\mu_i(c)[x\cdot e+ct]},  \quad  i=1,2.
\end{equation}
\subsection{Fast decay nature of the pushed wavefront}

\begin{thm}[Necessary and sufficient condition]\label{them1}
Assume that (\ref{k0}) holds  and let $\phi_{c^{*}} (s , x)$ be the wavefront of (\ref{wave}), with the speed  $c^*$ (the minimal speed). Consider the linear speed   $c_0$   defined in (\ref{var}).   The following results hold:
\begin{enumerate}[(i)]
\item If there
exists a speed $c=\bar{c}$ $>c_0$, such that  (\ref{wave}) has a
non-decreasing traveling wave solution $\phi_{\bar{c}}(s,x)$, connecting $0$ to $1$ and satisfying the asymptotic  behavior
\begin{equation}
\phi_{\bar{c}}(s,x)\sim C\Psi_{\mu_2(\bar{c})}(x)e^{\mu _{2}(\bar{c})s}\text{ as }
s\rightarrow -\infty   \label{3.5},
\end{equation}
 where $\mu_2$ is defined in (\ref{mu2a}) and  $C$ is an arbitrary positive constant, then we have $c^*=\bar{c}>c_0.$ In other words, the minimal speed $c^*$ is nonlinearly selected.
\item If the spreading speed $c^{*}$ is nonlinearly selected {\rm(}i.e. $c^*>c_0${\rm)}, then the wave front $\phi_{c^{*}} (s , x)$ has the fast decay behavior defined in (\ref{decay2b}):  \begin{equation}
\phi_{c^{*}}(s,x)\sim C_2 \Psi_{\mu_2}(x)e^{\mu_2(c^*)s} \text{ as } s \to -\infty, \text{ for some } C_2>0. \label{decay2bb}
\end{equation}
\end{enumerate}
\end{thm}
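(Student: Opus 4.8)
The plan is to prove the two implications separately, using the existence theory of Theorem~\ref{Berest_Hamel_ZFK} together with sliding/comparison arguments and the sharp characterization of decay rates at $-\infty$.

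\medskip

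\noindent\textbf{Part (i): a front at speed $\bar c>c_0$ with the slow-decay rate $\mu_2(\bar c)$ forces $c^*=\bar c$.} Since $\bar c>c_0>0$ is admissible, Theorem~\ref{Berest_Hamel_ZFK} gives $c^*\le\bar c$, so it suffices to rule out every $c\in[c^*,\bar c)$. Suppose, for contradiction, that there is a front $\phi_c$ with speed $c\in[c^*,\bar c)$. First I would show $\phi_c$ must have decay governed by $\mu_1(c)$ rather than $\mu_2(c)$: if $c>c_0$, then a front decaying like $e^{\mu_2(c)s}$ would, by comparison with the slower exponential $e^{\mu_1(c)s}$ subsolutions/supersolutions and the convexity-monotonicity of $\mu_1,\mu_2$, be comparable to the given $\phi_{\bar c}$ after a shift, and one would deduce $c=\bar c$ — the point being that the slow-decay front is ``rigid''. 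The clean way to organize this is: (a) for $c>c_0$ both $\Psi_{\mu_1(c)}(x)e^{\mu_1(c)s}$ and $\Psi_{\mu_2(c)}(x)e^{\mu_2(c)s}$ solve the linearized equation \eqref{linearwave}, and any front has one of the two asymptotics \eqref{decay1a}--\eqref{decay2b} (quoted from \cite{H}); (b) a front with the $\mu_1$ asymptotics at speed $c$ can be used, together with the front $\phi_{\bar c}$ with the $\mu_2$ asymptotics, to build a strict ordering that is preserved by the parabolic flow, and sliding in $s$ yields a contradiction with the strong maximum principle unless the two coincide. Since $\mu_1(\bar c)<\bar\mu<\mu_2(\bar c)$, the rates are genuinely different, so $\phi_c$ and $\phi_{\bar c}$ cannot coincide, and no such $c<\bar c$ exists; hence $c^*=\bar c>c_0$.

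\medskip

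\noindent\textbf{Part (ii): nonlinear selection forces the fast-decay rate at $c^*$.} Here I would argue by contradiction: assume $c^*>c_0$ but $\phi_{c^*}$ has the slow decay \eqref{decay1a}, i.e. $\phi_{c^*}(s,x)\sim C_1\Psi_{\mu_1(c^*)}(x)e^{\mu_1(c^*)s}$. The strategy is to construct, for speeds $c$ slightly below $c^*$, a pair of ordered super- and subsolutions of the wave profile equation \eqref{wave} and thereby produce a front at some $c<c^*$, contradicting minimality of $c^*$. The supersolution is built from the linear decay at $u=0$: since $k(0)>0$ and $c<c_0<c^*$... — more precisely, one uses the standard ansatz $\overline\phi=\min\{1,\,A_1\Psi_{\mu_1(c)}(x)e^{\mu_1(c)s}-A_2\Psi_{\nu}(x)e^{\nu s}\}$ with $\mu_1(c)<\nu\le\min\{2\mu_1(c),\mu_2(c)\}$, exploiting the KPP-like bound $f(x,u)\le \eta(x)u + (\text{higher order})$ that is \emph{available} because the front $\phi_{c^*}$ with slow decay certifies, via a comparison at $-\infty$, that $f$ behaves subcritically along this front; simultaneously, $\phi_{c^*}$ shifted appropriately serves as a subsolution for the slightly smaller speed $c$, using monotonicity of $\phi_{c^*}$ in $s$ and $(q\cdot e-c)\phi_s^{} \ge (q\cdot e-c^*)\phi_s^{}$ since $\phi_s\ge 0$ and $c<c^*$. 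Squeezing between these two gives a front at speed $c<c^*$ — the contradiction. Therefore $\phi_{c^*}$ must decay like $e^{\mu_2(c^*)s}$.

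\medskip

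\noindent\textbf{Main obstacle.} The delicate point in both parts is upgrading the \emph{formal} dichotomy \eqref{decay1a}--\eqref{decay2b} into a rigorous rigidity statement and, in Part (ii), producing a genuine subsolution at a strictly smaller speed from the slow-decaying $\phi_{c^*}$: one must control the interaction between the exponential tail and the nonlinear term $f(x,\phi)$ uniformly as $s\to-\infty$, keep the correction term $-A_2\Psi_\nu e^{\nu s}$ dominated by the leading term on the relevant half-line, and make sure the gluing at the level set $\{\overline\phi=1\}$ produces a bona fide (viscosity/weak) supersolution of the degenerate-elliptic operator $L_c$. Handling the $x$-dependence through the principal eigenfunctions $\Psi_{\mu_i}$ (which are only periodic, not constant) and the non-self-adjointness of $L_{e,q,\eta,\lambda}$ adds bookkeeping but no essential new difficulty, since $\Psi_{\mu_i}$ are smooth, positive and bounded away from $0$ by periodicity.
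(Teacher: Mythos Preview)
Your Part~(i) is in the spirit of the paper's alternative argument (Remark~\ref{alt.proof}): order $\phi_c$ above $\phi_{\bar c}$ near $-\infty$ using $\mu_1(c),\mu_2(c)<\mu_2(\bar c)$ for $c<\bar c$, then derive a contradiction via comparison. The paper's primary proof of Part~(i) is even shorter: it simply invokes Theorem~1.5(a) of \cite{H}, which already says a front with the fast decay \eqref{3.5} can only exist at the minimal speed. Either way, this half is fine.

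Part~(ii) has a genuine gap, and the paper's proof is quite different from yours. First, your supersolution $\overline\phi=\min\{1,\,A_1\Psi_{\mu_1(c)}e^{\mu_1(c)s}-A_2\Psi_\nu e^{\nu s}\}$ requires $f(x,u)\le\eta(x)u+O(u^{1+\alpha})$ with the error term \emph{controllable by the correction}. But in the non-KPP regime the $O(u^{1+\alpha})$ part is typically \emph{positive} (e.g.\ $f=u(1-u)(1+au)$ with $a>0$ gives $f(u)-u=(a-1)u^2+\cdots$), and a negative correction term only makes $L_c\overline\phi+f(x,\overline\phi)$ larger, not smaller; the assertion that the slow decay of $\phi_{c^*}$ ``certifies'' a KPP-like pointwise bound on $f$ is not substantiated. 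Second, and more fatally, your sub/supersolution pair cannot be ordered. Since $\mu_1$ is decreasing in $c$, for $c<c^*$ one has $\mu_1(c)>\mu_1(c^*)$, so your supersolution decays \emph{faster} at $-\infty$ than the subsolution $\phi_{c^*}$ (assumed to have rate $\mu_1(c^*)$). Hence $\overline\phi<\underline\phi$ for $s\ll 0$, no shift can repair this, and the squeeze fails.

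The paper avoids this by abandoning sub/supersolutions altogether: it perturbs the wave $W_{c^*}$ directly. One multiplies by a factor $\omega(s,x)$ that changes the tail rate from $\mu_1(c^*)$ to $\mu_1(c)$, writes $W_c=\bar W+V$, and solves for $V$ via the implicit function theorem. The key observation is that the linearized operator $T_{c^*t}M(W_{c^*})$ has principal eigenvalue $1$ with eigenvector $W'_{c^*}$, but $W'_{c^*}$ has decay $\mu_1(c^*)$ and is therefore \emph{excluded} from the weighted space $\mathcal V=\{v:\,ve^{-\mu_1(c)s}=o(1)\}$; hence $T_{c^*t}M(W_{c^*})-I$ is invertible on $\mathcal V$, and a solution $V=O(\delta)$ exists for $c=c^*-\delta$. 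This produces a genuine front below $c^*$, the desired contradiction. Your approach would need a fundamentally different mechanism to get around the ordering obstruction.
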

\begin{proof} (i). We first prove part one. Suppose that there is a traveling wave with speed $c=c'<\bar c$. Then, by Theorem 1.5(a) in \cite{H}, we have a contradiction with (\ref{3.5}). This contradiction implies that the minimal wave speed is nonlinearly selected. A more direct proof, under certain assumption, is provided in Remark \ref{alt.proof}, below.

(ii) For the second part,
instead of the definition of wavefront in (\ref{wave})-(\ref{bc}),  we can alternatively rephrase the definition of  a pulsating traveling wave in terms of semiflow, as done in Liang and  Zhao \cite{liang1}.  Assume that $Q(u_0)=u(t,x,u_0)$ is the solution semiflow induced by (\ref{1}), with the initial function $u_0(x)$, to be continuous, nonnegative and bounded. A traveling wave solution $\phi(s,x)$, with $\phi(-\infty,x)=0,\, \phi(\infty,x)=1$, should  then satisfy
\begin{equation}
Q[\phi(x\cdot e,x)]=\phi(x\cdot e+ct,x). \label{semi-wave}
\end{equation}
Due to the Laplacian operator in the equation, one can easily get that the semiflow $Q$ is compact and strongly positive.

 We assume that the minimal speed $c^*$ is nonlinearly selected; that is, $c^*>c_0$. We proceed  to show that at the speed $c=c^*$, the traveling wave $W_{c^*}(s,x)$ satisfies
\begin{equation}
W_{c^*}(s,x)\sim C \Psi_{\mu_2(c^*)}(x)e^{\mu_2(c^*)s} \text{ }   \    \text{as }  \   \   s \to -\infty,
\end{equation}
for some constant $C$.
By the alternatives (\ref{decay1a}) and (\ref{decay2b}), assume to the contrary that
\begin{equation}
W_{c^*}(s,x)\sim C_3 \Psi_{\mu_1(c^*)}(x) e^{\mu_1(c^*)s} \   \text{ }  \text{as }  \   \  s \to -\infty, \label{first}
\end{equation}
for some positive constant $C_3$ and eigenvector  $\Psi_{\mu_1(c^*)}$. We will  prove that  the operator $Q$ has a traveling wave $W_c(x\cdot e,x)$ satisfying
\begin{equation}
Q(W_c)=W_c(x\cdot e+ct,x) ~\text{ or } ~ T_{ct}Q(W_c)=W_c ,\label{*}
 \end{equation}
 for some speed $c=c^*-\delta$, where $T_{ct}W(s)=W(x-ct)$ is the right-shifting operator   and  $\delta$ is a sufficiently small and positive number. Hence, $c^*$ is not the minimal speed and this will lead to a contradiction.

 Indeed, under assumption (\ref{first}), we define
 \begin{equation}
 \bar W=W_{c^*}(s,x)\omega(s,x),\text{ where  }  \omega(s,x)=\frac{1}{1+ \frac{\Psi_{\mu_1(c)}(x)}{\Psi_{\mu_1(c^*)}(x)}\delta e^{-(\mu_1(c)-\mu_1(c^*))s}}.
 \end{equation}
  Note that when $\delta$ small, $\bar W$ is close to  $W_{c^*}$, but with a different decaying rate at $-\infty$. We will  show the existence of a solution to (\ref{*}), provided that $\delta$ is sufficiently small. In (\ref{*}), seek a $W_c$ of the form 
\begin{equation}
W_c=\bar W+V, \label{2.15}
\end{equation}
such that 
\begin{equation}
T_{ct}Q(\bar W+V)=\bar W+V,
\end{equation}
where $V=V(s,x)$ is a function to be determined. A straightforward  calculation leads us to
\begin{equation}
V=T_{c^*t}M(W_{c^*})V+F_{0}+M_{\delta}V+F_{high}(V) \label{V},
\end{equation}
where
\begin{equation}
F_{0}=T_{ct}Q(\bar W)-\bar W,
\end{equation}
\begin{equation}
M_{\delta}V=\left[T_{ct}M(\bar W)-T_{c^*t}M(W_{c^*})\right]V
\end{equation}
and
\begin{equation}
F_{high}(V)=T_{ct}Q(\bar W+V)-T_{ct}Q(\bar W)-T_{ct}M(\bar W)V.
\end{equation}
Here, $M(\bar W)$ is the Fr$\acute{e}$chet derivative of $Q$ around the function $\bar W$. With  a simple estimate, it follows  that $M_{\delta}V=O(\delta)V$ and $F_0=O(\delta)$, where  
$$
F_0=o(e^{\mu_1(c^*)s}) \    \     \text{as}  \   \   s \to -\infty.
$$
For a solution to (\ref{V}), we recall that $M(W_{c^*})$ is defined by
$$
M(W_{c^*})[V]=\lim_{\rho\rightarrow 0}\frac{Q[W_{c^*}+\rho V]-Q[W_{c^*}]}{\rho},
$$
for $V$ in the space $C_0:=\{ u \in  C(\mathbb{R} \times [0,L],\mathbb{R}): u( \pm \infty,x)=0\}.$   The operator $T_{c^*t}M(W_{c^*})$ is compact and  strongly positive, its principal eigenvalue is $\lambda=1$ and the corresponding principal eigenvector is $\bar {v}=W'_{c^*}$. It is not difficult to see  that $W'_{c^*}$ shares the same decaying behavior as $W_{c^*}$. That is,

\begin{equation}
W'_{c^*} \sim {C(x)}  e^{\mu_1(c^*)s}     \    \    \text{ }   \text{as } \     \     s \to -\infty,  \label{second}
\end{equation}
for some periodic function ${C(x)}$, where $W'_{c^*}$ represents the first derivative of $W_{c^*}(s,x)$ with respect to $s$.

 Next, in order to omit the eigenvector   $\bar{v}$, we construct  a weighted space $\mathcal{V}$ as
$$
\mathcal {V}=\{ v \in C_0: v e^{-\mu_1(c)s}=o(1) \text{ as } s \to -\infty                       \},
$$
where $c=c^*-\delta$. Consequently, we see  that the eigenvector $\bar{v}=W'_{c^*}$ is not in $\mathcal{V}$, and this rules out $\lambda=1$ of being an eigenvalue for $T_{c^*t}M(W_{c^*})$ defined on $\mathcal{V}$. Since the operator
    $T_{c^*t}M(W_{c^*})$ is compact and  strongly positive in $\mathcal{V}$, it follows  that $T_{c^*t}M(W_{c^*})-I$ has a bounded inverse in $\mathcal{V}$, where $I$ is the identity operator. Using  the  inverse function theorem in the space $\mathcal{V}$, we obtain  a small positive number $\delta_0$ so that problem (\ref{V}) has a solution $V$ for any $\delta \in [0,\delta_0)$. 
    
    Now, we have a solution $W_c$, for $c=c^*-\delta$, as desired in  (\ref{2.15}). The positivity of $W_c$ is guaranteed by the choice of a sufficiently small $\delta$ (smaller than $\delta_0$) and this completes the proof.
\end{proof}
\begin{remark}[A more direct proof of Theorem \ref{them1}, Part (i), provided exponential stability of the positive equilibrium]\label{alt.proof}
In the proof of the  first part of Theorem \ref{them1}, we have made use of  Theorem 1.5 (a) in Hamel \cite{H} of \cite{H}. Actually, we can give a more direct  proof of the latter,  in the case where  the positive equilibrium ( $u=1$, in our setting) is exponentially stable. The proof techniques are based on linearization at $u=1$ and semiflows. 

\begin{proof}[Proof of the statement in Remark \ref{alt.proof}] Suppose that (\ref{3.5}) is true. We  proceed   to prove that (\ref{wave}) has no traveling waves for any $c$ in  $(c_0, \bar{c})$. To the contrary, suppose  that for some $c \in (c_0, \bar{c})$, there exists  a traveling wave $W_c(x\cdot e,x)$ satisfying either (\ref{decay1a}) or (\ref{decay2b}).
In view of the monotonicity of $\mu_1(c)$ and $\mu_2(c)$ in $c$, we get  $W_c(s,x) > \phi_{\bar{c}}(s,x),$ for $s$ near $-\infty.$

To understand the behavior of this solution near $+\infty$, 
     let     $\bar{k}(-\gamma)$ be the principal eigenvalue of the linear operator $\displaystyle{L_{e,q,\eta,-\gamma}}$ defined in (\ref{Leq}) (but with $\eta$ replaced by $\zeta(x):=\partial_uf(x,1)$). By linearizing equation (\ref{wave}) at $1, $ we obtain a characteristic equation  $-\gamma c-\bar{k}(-\gamma)=0$. We assume that
       \begin{equation}
       \bar{k}(0)<0. \label{stable}
       \end{equation}
   We emphasize that assumption \eqref{stable} guarantees the convexity of $\gamma\mapsto\bar k(-\gamma)$ and is sufficient for the exponential  stability of the positive equilibrium 1 (see Hamel \cite{H}, page 364). 
 Based on the convexity of $\bar{k}(-\gamma)$, we can find a unique positive $\gamma$ that solves the characteristic equation. Moreover, $\gamma$ is a decreasing function in $c,$ whenever $c\ge c_0$. This yields to
\begin{equation}
W_c\sim 1-\Psi_{\gamma}(x)e^{-\gamma x},
\end{equation}
for some positive $\gamma$ and  positive function $\Psi_{\gamma}(x)$. In view of the monotonicity of $\gamma$ in $c$, we  further obtain that $\phi_{\bar c}(s,x) \ll W_c(s,x)$ for $s$ near $\infty$. Therefore, it is always possible to  make a shift of  distance $\xi_0$ for the variable $s$ in $W_c(s, x)$ such that
$$
\bar{W}_c(x\cdot e,x)=W_c(x\cdot e+\xi_0,x) > \phi_{\bar c}(x\cdot e,x).
$$
The monotonicity of the map $Q$ implies that 
\begin{equation}
\bar{W}_c(x\cdot e+ct,x)=Q(\bar{W}_c(x\cdot e,x))\ge Q(\phi_{\bar c}(x\cdot e,x))=\phi_{\bar c}(x\cdot e+\bar c t,x) \label{cc}
\end{equation}
 On the other hand, on the line $x\cdot e+t \bar c=z_0$,  for some fixed value $z_0$, it follows that  $\phi_{\bar c}(x\cdot e+\bar c t,x)=\phi_{\bar c}(z_0,x)> 0$ and
$$
\bar{W}_c(x\cdot e+ct,x)=\bar{W}_c(z_0-t(\bar c-c),x) \to 0 \text{ as } t \to +\infty.
$$
The latter  contradicts (\ref{cc}). As such, there exist no  traveling waves for $Q$ when $c\in (c_0, \bar c)$. It follows now, from Theorem A, that we cannot have traveling waves with speed $c=c_0$. This provides an alternative proof to the first part of Theorem \ref{them1}, under the assumption that exponential stability of the positive equilibrium holds. \end{proof}
\end{remark}
\begin{remark}[More on the proof of Theorem \ref{them1}]
In the degenerate case, where $\bar{k}(0)=0$, we speculate that the above idea and argument still work, as long as we can show that the traveling wave solution is non-increasing in $c$, for large $s$ ($ s \to+ \infty$). This could be done by constructing an upper solution $\hat \phi=1$  and a lower solution $ \phi= \phi_{\bar c}$ for (\ref{wave}), with $\bar c>c \text{ and } s\ge s_0$, where $s_0$ is a given constant. The uniqueness of the wavefront (up to translation)  may be of use then.  We will leave this idea to interested readers.
\end{remark}
\begin{remark}
The second part of our Theorem \ref{them1} confirms the conjecture in \cite[page, 363]{H}.
\end{remark}

\begin{remark}[More accessible criteria]
Although we have unveiled the important feature of pushed wavefronts in Theorem \ref{them1},  we cannot practically establish linear/nonlinear selection criteria by  Theorem \ref{them1}. This is because exact traveling wave formulas are unknown. To this end, Sections \ref{independent.linear} and \ref{independent.nonlinear}, below, will be dedicated to develop certain easy-to-apply formulas that determine the speed selection mechanism. The formulas are based on constructions of upper or lower solutions that  approximate the exact traveling waves to some extent. The establishment of these criteria does not rely on Theorem \ref{them1} and can be of independent interest to readers.
\end{remark}

\subsection{Linear selection}\label{independent.linear}

\begin{thm}[Linear Selection]\label{them2}
Let    $c_0$ be as defined in (\ref{linearspeed}).  Further, assume that there exists a continuous and positive function $U(s,x)$ satisfying

\begin{equation} L_{c_0}U+f(x,U)\le 0, \label{c1}
\end{equation}
together with
\begin{equation}
\liminf_{x\rightarrow \infty} U(s,x)> 0 \text{ and } \lim_{s\rightarrow -\infty} U(s,x)= 0.\label{leftbound}
\end{equation}
  Then, the linear selection is realized. That is, $c^*(e)=c_0(e).$
\end{thm}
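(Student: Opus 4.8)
The plan is to show that the hypotheses force $c^* \le c_0$; since the reverse inequality $c^* \ge c_0$ always holds (noted in the excerpt, as a KPP-type argument gives $c_0$ as a lower bound for $c^*$), this yields $c^*(e) = c_0(e)$. The natural vehicle is a sliding/comparison argument: the function $U$ is a (generalized) supersolution of the wave profile equation \eqref{wave} at speed $c_0$, so one wants to conclude that a genuine pulsating front exists at speed $c_0$, which by Theorem \ref{Berest_Hamel_ZFK} is impossible unless $c_0 \ge c^*$.

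First I would set up the parabolic comparison. Given the supersolution inequality $L_{c_0}U + f(x,U) \le 0$ together with the boundary behavior \eqref{leftbound}, I would reinterpret $U$ in the moving frame: the function $\overline{u}(t,x) := \min\{U(x\cdot e + c_0 t,\, x),\, 1\}$ is a supersolution of the original equation \eqref{1} (the truncation at $1$ is harmless because $f(x,1)=0$ and $f \ge 0$, and the minimum of two supersolutions is a supersolution). Note $\overline{u}$ is bounded below away from $0$ as $x\cdot e \to +\infty$ and tends to $0$ as $x \cdot e \to -\infty$, uniformly in the transverse directions by periodicity. Second, I would take a compactly supported (or front-like) subsolution $\underline{u}_0$ with $0 \le \underline{u}_0 \le \overline{u}(0,\cdot)$ that is large enough to trigger propagation — for instance using the known spreading result that for \eqref{1} any front-like initial datum below $1$ but bounded away from $0$ on a large half-space invades at speed at least $c^*$ in direction $e$. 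Letting the solution $u(t,x)$ with this datum evolve, parabolic comparison gives $u(t,x) \le \overline{u}(t,x)$ for all $t>0$. Third, evaluate along the moving frame: fix $z \in \mathbb{R}$ and look at points with $x\cdot e = z - c_0 t$. On one hand $\overline{u}(t,x) = \min\{U(z,x),1\}$ stays bounded (and for $z$ large, bounded away from $0$ is not what we need — rather we use that it is $\le$ something that does \emph{not} spread faster than $c_0$). On the other hand, if $c^* > c_0$, the spreading lower bound for $u$ forces $u(t,x) \to 1$ along any ray $x\cdot e = z - ct$ with $c_0 < c < c^*$...

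Here is the cleaner contradiction, which I would actually use. Suppose $c^* > c_0$. Pick $c$ with $c_0 < c < c^*$. By the spreading property of \eqref{1} (level sets of $u$ from front-like data move at speed $c^*$ in direction $e$), we have $\liminf_{t\to\infty} \inf_{x\cdot e \le ct} u(t,x) > 0$, in fact $u(t,x) \to 1$ locally uniformly in the region $x \cdot e \le ct$. But along $x\cdot e = c t$ we have, by the comparison $u \le \overline{u}$, that $u(t,x) \le U(x\cdot e + c_0 t, x) = U((c - c_0) t + (\text{transverse-independent}), x) $; wait — with $x \cdot e = ct$ we get $x\cdot e + c_0 t = (c+c_0) t \to +\infty$, which is the wrong sign. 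So instead I compare on the \emph{left}: pick the subsolution to be front-like invading in the direction $-e$ as well, or simpler: use that $\overline{u}(t,x) \to 0$ as $x\cdot e + c_0 t \to -\infty$, i.e. on the region $x\cdot e \le -c_0 t - R(\epsilon)$ we have $\overline{u} < \epsilon$, hence $u < \epsilon$ there; whereas if $c^* > c_0$ the front of $u$ (started from data filling a half-space $x \cdot e \le 0$) reaches $x\cdot e \approx -c^* t$, contradicting $u < \epsilon$ on $x\cdot e \le -c_0 t - R$. I would make this rigorous by choosing $\underline{u}_0$ supported on (or equal to a front profile on) $\{x\cdot e \ge 0\}$ with $\underline{u}_0 \le \overline{u}(0,\cdot)$, whose solution converges to a pulsating front of speed $c^*$ (or spreads at speed $c^*$) in the $+e$ direction, equivalently the $1$-region recedes on the left only at speed $\le c^* $ but the $0$-region on the left recedes at speed exactly $c^*$... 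I will instead phrase it as: $u$ invades the region $x \cdot e \le (c^* - \epsilon)t$ getting close to $1$, while $u \le \overline u \to 0$ on $x\cdot e + c_0 t \le -R$; taking $\epsilon$ small so $c^* - \epsilon > c_0$ and choosing $x$ with $(c^*-\epsilon)t \ge x\cdot e$ large negative gives $x\cdot e + c_0 t \le -(c^*-\epsilon-c_0)t + (\text{bounded}) \to -\infty$, so $u \to 0$ there, contradicting $u \to 1$.

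The main obstacle is the propagation/spreading input: to run the contradiction I must invoke that solutions of \eqref{1} from suitable front-like data actually spread at the speed $c^*$ (equivalently, that $c^*$ is the spreading speed, not merely the minimal front speed), and that such a front-like datum can be chosen below $\overline{u}(0,\cdot)$. For KPP this is classical; in the general (non-KPP) periodic setting one needs the results identifying $c^*$ with the spreading speed (as in Berestycki--Hamel and subsequent work, or the Liang--Zhao abstract framework already cited in the paper). The remaining steps — that $\min\{U,1\}$ in the moving frame is a parabolic supersolution, the comparison principle, and the elementary inequality chasing the two speeds $c_0 < c^* - \epsilon$ — are routine. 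I would also need to check the hypothesis $\liminf_{x\to\infty} U > 0$ in \eqref{leftbound} is used exactly to guarantee $\overline u(0,\cdot)$ dominates a front-like datum that is bounded away from $0$ on a half-space, which is what feeds the spreading lemma.
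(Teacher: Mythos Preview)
Your approach is correct and is essentially a parabolic-comparison rephrasing of the paper's argument; both rely on the same input, namely that the minimal front speed $c^*$ coincides with the spreading speed in the monotone semiflow sense (the Liang--Zhao/Fang--Yu--Zhao framework the paper cites as \cite{fy,liang1}).

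The paper works directly with the abstract iteration defining the spreading speed: it recalls $c^* = \sup\{c : a(c;-\infty)=1\}$ where $a_{n+1}(c;x)=\max\{\phi(x), T_c Q_1[a_n](x)\}$ with $\phi$ a front-like datum of small height, and then shows by induction that $a_n(c_0;x)\le U(x\cdot e,x)$ for all $n$, using \eqref{c1} and the fact that $\liminf_{s\to\infty}U>0$ lets one slide $\phi$ under $U$. This forces $a(c_0;-\infty)=0$, hence $c^*\le c_0$. Your argument unpacks the same comparison: you place a small front-like datum under $\min\{U(x\cdot e,\,\cdot),1\}$, evolve parabolically, and obtain a contradiction from the mismatch $c_0<c^*$ between the speed of the cap and the spreading speed of the solution. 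The paper's version avoids the detour through $\min\{U,1\}$ and the direction bookkeeping that tripped you up mid-argument (and which you eventually resolved correctly: along $x\cdot e=-ct$ with $c_0<c<c^*$, spreading gives $u\to 1$ while $x\cdot e+c_0 t\to-\infty$ gives $\overline u\to 0$). What your route buys is that it stays at the PDE level and does not require recalling the precise iterative construction of $c^*$; what the paper's route buys is brevity and a cleaner induction once that construction is in hand. The one technical point you should state more carefully is that $\min\{U,1\}$ is a supersolution only in a generalized (barrier) sense at the interface, but this is standard and harmless for the parabolic comparison you need.
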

\begin{proof} Similar to what is done in \cite{fy} and \cite{liang1}, we can define the leftward spreading speed $c^*$ as
\begin{equation}\label{spreadingspeed}
  c^*:=\sup\{c:\lim_{i\to -\infty,  i\in \mathbb{Z}}a(c;iL+\theta)=1, \theta \in[0,L] \}
\end{equation}
where $$a(c;x)=\lim_{n\rightarrow\infty} a_n(c;x).$$
In our setting, for
a given  real number $c$, the  sequence of functions $\{a_n\}_{n=0}^{\infty}$ is defined as
\begin{equation}
  a_0(c;x)=\phi(x), \quad  a_{n+1}(c;x)=R_c[a_n(c;\cdot)](x), \label{2b}
  \end{equation}
 and
    \begin{equation}
  R_c[a_n](x)=\max \{\phi(x),T_{c}[Q_1[a_n]](x)\},   \label{2a}
  \end{equation}
where $\phi(x)$ is non-decreasing function that satisfies   $$\phi(x)=0\text{ for }x\le 0~\text{ and }\lim_{x\to +\infty} (\phi(x)-\omega)=0,$$  $0<\omega<1$, and $Q_1$ is the solution semiflow $Q_t$ at $t=1$.
Here, the limit in (\ref{spreadingspeed}) is obtained by splitting the variable $x$  interval-by-interval with each interval length as $L$.  $c^*$ is independent of the choice of $ \phi$, see \cite{fy,liang1}.  Therefore, we can let $ \phi(\infty)$ be small so that the upper solution $U$ (or a shift of $U$ if needed) satisfies
\begin{equation}
a_0(c_0;x) \le U(x\cdot e,x)
\end{equation}
for all $x\in (-\infty,\infty)$. From (\ref{2a}), (\ref{2b}), (\ref{leftbound}) and (\ref{c1}), by induction, it follows that
\[
a_{n+1}(c_0;x)\le U(x\cdot e,x),  \quad n\ge 0.
\]
Thus, $a(c_0;-\infty)=0$.  By (\ref{spreadingspeed}), we have $c^*\le c_0$.   Therefore,  we arrive at $c^*=c_0$ by Theorem A, and  the linear selection is realized.
\end{proof}

\begin{corollary} \label{them3}
   Suppose that $f(x,u)\le f'(x,0)u$.
 Then, the linear selection is realized.
 \end{corollary}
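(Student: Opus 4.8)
The plan is to obtain the corollary as a direct consequence of Theorem \ref{them2}, by producing an explicit upper solution at the speed $c_0$ from the linearized wave problem. Write $\eta(x)=f'(x,0)=\partial_u f(x,0)$, so that the hypothesis reads $f(x,u)\le \eta(x)\,u$ for all $u\in[0,1]$, i.e.\ it is exactly the KPP-type sublinearity at $u=0$. As noted after \eqref{linear}, together with \eqref{f1}--\eqref{f2} this forces $\eta$ to be positive somewhere, and we place ourselves in the regime \eqref{k0} underlying the definition \eqref{var} of $c_0$. Let $\bar\mu>0$ be the minimizer in \eqref{var}, so that $c_0(e)\,\bar\mu=k(\bar\mu)$, and let $\Psi_{\bar\mu}$ denote the positive, $L$-periodic principal eigenfunction of the operator $L_{e,q,\eta,\bar\mu}$ from \eqref{Leq} associated with $k(\bar\mu)$.

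The first step is to evaluate $L_{c_0}$, as given in \eqref{degen_elliptic}, on the exponential profile $v(s,x):=\Psi_{\bar\mu}(x)\,e^{\bar\mu s}$. Substituting $v$ into \eqref{degen_elliptic}, factoring out $e^{\bar\mu s}$, and then using the eigenvalue identity
\[
\Delta\Psi_{\bar\mu}+2\bar\mu\, e\cdot\nabla\Psi_{\bar\mu}+q\cdot\nabla\Psi_{\bar\mu}+\big[\bar\mu^2+\bar\mu\, q\cdot e+\eta\big]\Psi_{\bar\mu}=k(\bar\mu)\Psi_{\bar\mu},
\]
one finds $L_{c_0}v=\big(k(\bar\mu)-c_0\bar\mu-\eta(x)\big)v=-\eta(x)\,v$, since $c_0\bar\mu=k(\bar\mu)$. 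Hence, wherever $0<v<1$, the hypothesis gives $L_{c_0}v+f(x,v)=-\eta(x)v+f(x,v)\le 0$. I would then set
\[
U(s,x):=\min\{1,\ \Psi_{\bar\mu}(x)\,e^{\bar\mu s}\}.
\]
Because $\Psi_{\bar\mu}$ is continuous, $L$-periodic and strictly positive, it is bounded above and below by positive constants on $\mathcal C$; therefore $U$ is continuous and positive, $U(s,x)\to 0$ uniformly in $x$ as $s\to-\infty$, and $\liminf_{x\to\infty}U(s,x)>0$ for each fixed $s$, so \eqref{leftbound} holds. On $\{\Psi_{\bar\mu}e^{\bar\mu s}<1\}$ one has $U=v$ and the computation above yields \eqref{c1}; on $\{\Psi_{\bar\mu}e^{\bar\mu s}>1\}$ one has $U\equiv 1$ and $L_{c_0}1+f(x,1)=0$. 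Thus a suitable translate of $U$ is an admissible upper solution in Theorem \ref{them2}, which gives $c^*(e)\le c_0(e)$; combined with the inequality $c^*\ge c_0$, which always holds, this yields $c^*(e)=c_0(e)$.

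The step I expect to need the most care is the corner set $\{\Psi_{\bar\mu}(x)e^{\bar\mu s}=1\}$, on which $U$ is only Lipschitz, so $L_{c_0}U$ has no classical pointwise meaning there. This is not a genuine obstruction: $U$ is the pointwise minimum of two classical upper solutions ($v$ and the constant $1$), and it is exactly in this generalized form that a barrier is used in the recursive comparison scheme proving Theorem \ref{them2}. Indeed, the induction $a_{n+1}(c_0;x)\le U(x\cdot e,x)$ still goes through, because the iterates $a_n$ stay in $[0,1]$, so at each point either $U=1\ge a_{n+1}$ trivially, or $U$ locally coincides with the smooth supersolution $\Psi_{\bar\mu}e^{\bar\mu s}$, to which the parabolic maximum principle applies directly; the initial comparison $a_0\le U(\cdot\, e,\cdot)$ is arranged as in the proof of Theorem \ref{them2} by taking $\phi(\infty)$ small and, if needed, translating $U$ in $s$. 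If one prefers to stay entirely within classical upper solutions, one may instead regularize $\min\{1,v\}$ to a nearby $C^2$ function trapped between $\min\{1,v\}$ and $\min\{1,v\}+\eps$ and verify \eqref{c1} up to an error localized near the corner; either route completes the argument.
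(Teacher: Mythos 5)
Your proposal is correct and follows essentially the same route as the paper: the paper's proof is exactly the observation that $U=e^{\bar\mu s}\Psi_{\bar\mu}(x)$ is an upper solution at $c=c_0$, which rests on the same computation $L_{c_0}v=-\eta(x)v$ plus the sublinearity hypothesis. Your truncation $\min\{1,v\}$ and the discussion of the corner set are a slightly more careful packaging of the same idea (and sidestep the fact that $f(x,\cdot)$ is only defined on $[0,1]$), but they do not change the argument.
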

 \begin{proof}For $c=c_0$, one can easily verify that  $U=e^{\bar{\mu}s}\Psi_{\bar{\mu}}(x)$ is an   upper solution of the wave profile equation, where $\bar{\mu}$ is defined in (\ref{var}).
 \end{proof}

\begin{corollary} \label{co2.2} Let \begin{equation}\label{uppersol}
\overline\phi (s,x):=\ds{\frac{\Psi(x)}{\Psi(x)+e^{-\mu_{1}s}}},
\end{equation}
where $\Psi$ is the principal eigenfunction of \eqref{Leq} corresponding to $\lambda=\mu_{1}=\bar \mu$ and the principal eigenvalue $k(\mu_{1})=\mu_{1}c$,  when $c=c_0$.  Then, the minimal speed  is linearly selected if
\begin{equation}\label{cor2}
  -2\mu_{1}^{2}\up \Psi -2\up\frac{|\nabla \Psi|^{2}}{\Psi} -4\mu_{1}\up ~e\cdot\nabla\Psi +\frac{\Psi f(x,\up)}{\up\left(1-\up\right)}-\eta(x)\Psi\le 0
\end{equation}
\end{corollary}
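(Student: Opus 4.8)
The plan is to deduce Corollary \ref{co2.2} directly from Theorem \ref{them2} by showing that the explicitly given function $\up(s,x)$ in \eqref{uppersol} is an admissible supersolution, i.e. it satisfies the differential inequality \eqref{c1} with $c=c_0$ and the boundary behavior \eqref{leftbound}. First I would record the limiting behavior of $\up$: since $\Psi$ is continuous, positive and $L$-periodic in $x$, it is bounded away from $0$ and $\infty$, so $e^{-\mu_1 s}\to 0$ as $s\to+\infty$ forces $\up(s,x)\to 1$ (in fact $\liminf_{x\to\infty}\up(s,x)>0$ trivially holds because $\up$ is already bounded below on any half-line $s\ge s_0$), while $e^{-\mu_1 s}\to\infty$ as $s\to-\infty$ gives $\up(s,x)\to 0$ uniformly in $x$. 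This verifies \eqref{leftbound}. Note also $0<\up<1$, so $f(x,\up)$ is defined.

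Next I would compute $L_{c_0}\up$ explicitly. Writing $\up = \Psi/(\Psi + g)$ with $g(s):=e^{-\mu_1 s}$, one has the convenient algebraic identities $\up(1-\up)=\Psi g/(\Psi+g)^2$ and derivatives of $\up$ in $s$ and $x$ expressed through $\Psi$, $\nabla\Psi$, $g$ and $g'=-\mu_1 g$, $g''=\mu_1^2 g$. Substituting into $L_{c_0}\up = \Delta_x\up + \up_{ss} + 2e\cdot\nabla_x\up_s + q\cdot\nabla_x\up + (q\cdot e - c_0)\up_s$ and using that $\Psi$ solves $L_{e,q,\eta,\mu_1}\Psi = k(\mu_1)\Psi$, i.e. $\Delta\Psi + 2\mu_1 e\cdot\nabla\Psi + q\cdot\nabla\Psi + (\mu_1^2 + \mu_1 q\cdot e + \eta)\Psi = k(\mu_1)\Psi$, together with $k(\mu_1) = \mu_1 c_0$, the many terms should collapse. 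The key cancellation is that the "linear-order" part of $L_{c_0}\up + \eta(x)\up$ vanishes exactly (this is precisely the statement that $e^{\mu_1 s}\Psi(x)$ solves the linearized equation \eqref{linearwave} at $c=c_0$), leaving a remainder that, after multiplying through by $(\Psi+g)^2/(\Psi g)$ — equivalently dividing by $\up(1-\up)$ — matches $-\big[$ the left-hand side of \eqref{cor2}$\big]$ up to the term coming from $f$. More precisely, after the algebra one should arrive at
\[
\frac{L_{c_0}\up + f(x,\up)}{\up(1-\up)} \;=\; -\,\Big( -2\mu_1^2\up\,\Psi - 2\up\,\frac{|\nabla\Psi|^2}{\Psi} - 4\mu_1\up\, e\cdot\nabla\Psi + \frac{\Psi f(x,\up)}{\up(1-\up)} - \eta(x)\Psi \Big)\cdot\frac{1}{\Psi}
\]
or something of this exact shape; the point is that the bracketed quantity is (a positive multiple of) the negative of $L_{c_0}\up + f(x,\up)$ divided by $\up(1-\up)>0$. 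Hence the hypothesis \eqref{cor2} is equivalent to $L_{c_0}\up + f(x,\up)\le 0$, i.e. \eqref{c1} holds with $U=\up$.

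Finally, with \eqref{c1} and \eqref{leftbound} verified, Theorem \ref{them2} applies and yields $c^*(e)=c_0(e)$, completing the proof. The main obstacle is purely computational: carefully carrying out the substitution of \eqref{uppersol} into the degenerate elliptic operator $L_{c_0}$, keeping track of the mixed $s$--$x$ derivative term $2e\cdot\nabla_x\up_s$ and the advection term $q\cdot\nabla_x\up$, and then confirming that precisely the terms forming the linearized operator applied to $e^{\mu_1 s}\Psi$ cancel (using $k(\mu_1)=\mu_1 c_0$), so that the leftover is exactly the expression in \eqref{cor2} after dividing by $\up(1-\up)$. One must be slightly careful that $\Psi$ appears both differentiated in $x$ and undifferentiated, and that terms like $|\nabla\Psi|^2/\Psi$ arise from $\up_{x_ix_i}$ through the quotient rule; a clean way to organize this is to set $v:=\log\up$ or to work with $w:=g/\Psi = (1-\up)/\up$ so that $\up = 1/(1+w)$ and $w$ satisfies a simpler equation, then translate back. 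No genuine analytic difficulty is expected beyond this bookkeeping.
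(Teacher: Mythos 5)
Your proposal is correct and follows essentially the same route as the paper: substitute $\up=\Psi/(\Psi+e^{-\mu_1 s})$ into $L_{c_0}\up+f(x,\up)$, use the eigenvalue equation $L_{e,q,\eta,\mu_1}\Psi=k(\mu_1)\Psi$ together with $k(\mu_1)=c_0\mu_1$ to cancel the linear-order terms, and identify the remainder as $\frac{\up(1-\up)}{\Psi}$ times the left-hand side of \eqref{cor2}, so that \eqref{cor2} gives the supersolution inequality \eqref{c1} and Theorem \ref{them2} applies. The only blemish is the sign in your tentative displayed identity: the bracketed quantity is a \emph{positive} multiple of $L_{c_0}\up+f(x,\up)$ (not of its negative), which is what actually makes \eqref{cor2} imply $L_{c_0}\up+f(x,\up)\le 0$; your hedged conclusion is nonetheless the right one.
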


\begin{proof}
 We compute
\begin{equation}\label{prop}\up_{s}(s,x)=
\mu_{1}\up\left(1-\up\right), \quad \up_{ss}(s,x)=\mu_{1}^{2}\up\left(1-\up\right)(1-2\up)\end{equation}
and \[1-\up(s,x)=\frac{e^{-\mu_{1}s}}{\Psi(x)+e^{-\mu_{1}s}}\text{ for all }(s,x)\in\R\times\R^{N}.\]
Also, \[\nabla_{x}\up(s,x)=\ds{\frac{e^{-\mu_{1}s}\nabla \Psi(x)}{\left(\Psi(x)+e^{-\mu_{1}s}\right)^{2}}}=\up\left(1-\up\right)\frac{\nabla \Psi(x)}{\Psi(x)},\]
which leads to
\[\nabla_{x}\up_{z}=\mu_{1}\up\left(1-\up\right)(1-2\up)\frac{\nabla\Psi}{\Psi}.\]
Moreover, \[\begin{array}{rl}\Delta_{x}\up=&\ds{ (\nabla\up-2\up\nabla_{x}\up)\cdot \ds{\frac{\nabla\Psi}{\Psi}}+\up\left(1-\up\right)\frac{\Delta\Psi}{\Psi}\frac{|\nabla\Psi|^{2}}{|\Psi|^{2}}}\vspace{10 pt}\\
=&\up(1-\up)(1-2\up)\ds{\frac{|\nabla\Psi|^{2}}{|\Psi|^{2}}}+\up(1-\up)\frac{\Delta\Psi}{\Psi}+\up(1-\up)\ds{\frac{|\nabla\Psi|^{2}}{|\Psi|^{2}}}.
\end{array}\]
Now, we substitute the above quantities in $L_{c}\up+f(x,\up)$ to obtain
\begin{equation}
\begin{array}{l}
L_{c}\up+f(x,\up)
\vspace{10pt}\\
=\ds{\frac{\up\left(1-\up\right)}{\Psi}}\left\{\mu_{1}^{2}(1-2\up)\Psi-2\up\frac{|\nabla\Psi|^{2}}{\Psi}+\Delta\Psi \right.\\
\left. +2\mu_{1}(1-2\up)e\cdot\nabla\Psi+q\cdot\nabla\Psi+\mu_{1}q\cdot e\Psi-c\mu_{1}\Psi+\frac{\Psi f(x,\up)}{\up\left(1-\up\right)}\right\}\vspace{10pt}\\
=\ds{\frac{\up\left(1-\up\right)}{\Psi}}\left\{k(\mu_{1})\Psi-c\mu_{1}\Psi-2\mu_{1}^{2}\up \Psi -2\up\frac{|\nabla \Psi|^{2}}{\Psi}\right.\\
\left. -4\mu_{1}\up ~e\cdot\nabla\Psi +\frac{\Psi f(x,\up)}{\up\left(1-\up\right)}-\eta(x)\Psi\right\}\vspace{10 pt}\\
=\ds{\frac{\up\left(1-\up\right)}{\Psi}}\left\{-2\mu_{1}^{2}\up \Psi -2\up\frac{|\nabla \Psi|^{2}}{\Psi} -4\mu_{1}\up ~e\cdot\nabla\Psi+ \frac{\Psi f(x,\up)}{\up\left(1-\up\right)}-\eta(x)\Psi\right\}.
\end{array}
\end{equation}
In the last line of the above equation, we used $k(\mu_{1})-c\mu_{1}=0.$ Therefore, $\bar \phi$ is an upper solution, when $c=c_0$.  Appealing to Theorem \ref{them2}, the proof is complete.
\end{proof}

\subsection{Nonlinear selection}\label{independent.nonlinear}

 \begin{thm}[Nonlinear selection]\label{them5}
   For $c_1>c_0$, suppose that there exists a function $V(s,x)$ satisfying
   \begin{equation}
  0 < V(s,x) < 1, ~~   \limsup_{s\rightarrow \infty} V(s,x) < 1, ~~ V(s,x)=\Psi_{{\mu_2(c_1)}}(x)e^{\mu_2(c_1)s}   ~ \text{ as }~ s\rightarrow -\infty     \label{non1}
   \end{equation}
   and
   \begin{equation} L_{c_1}V+f(x,V)\ge 0, \label{c1a}
\end{equation}
  where $\mu_2(c_1)$ is defined in (\ref{mu2a}). Then, $c^* \geq c_1$ and no traveling waves exist for $c\in[c_0,c_1)$. In other words, the nonlinear selection is realized.
  \label{them8}
   \end{thm}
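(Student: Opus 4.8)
The plan is to mirror the structure of the proof of Theorem \ref{them2}, but now using $V$ as a \emph{lower} solution instead of $U$ as an upper solution, and to run the comparison in the opposite direction. First I would recall the recursive scheme $\{a_n(c;x)\}$ from \eqref{2b}--\eqref{2a} and the characterization \eqref{spreadingspeed} of the leftward spreading speed $c^*$, which is valid by \cite{fy,liang1} and is independent of the choice of the initial profile $\phi$. Since the semiflow $Q$ is order preserving and $V$ satisfies the differential inequality \eqref{c1a}, the function $V(x\cdot e,x)$ (or an appropriate translate of it) is a generalized lower solution for the evolution equation \eqref{1} at speed $c_1$: writing $v(t,x):=V(x\cdot e+c_1 t,x)$, the inequality \eqref{c1a} says exactly that $v_t\le \Delta v+q\cdot\nabla v+f(x,v)$, so $Q_t[V(x\cdot e,\cdot)]\ge V(x\cdot e+c_1 t,\cdot)$ by the parabolic comparison principle.

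Next I would choose the initial profile $\phi$ in \eqref{2b} so that $\phi(x)\le V(x\cdot e,x)$ for all $x$; this is possible because $\phi(x)=0$ for $x\cdot e\le 0$, $\phi$ is bounded above by a small $\omega<1$, and $V>0$ everywhere with $\limsup_{s\to\infty}V<1$ (a shift of $V$ in $s$ handles the behavior near $+\infty$, exactly as in Theorem \ref{them2}). Then, by induction using \eqref{2a}, the monotonicity of $Q_1$, and the lower-solution property $T_{c_1}[Q_1[V(x\cdot e,\cdot)]]\ge V(x\cdot e,\cdot)$, one gets $a_n(c_1;x)\ge \min\{\phi(x), \text{(something positive)}\}$; more precisely the iteration stays above the translate of $V$, so $a(c_1;x)\ge V(x\cdot e,x)$. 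Because $V(s,x)\sim \Psi_{\mu_2(c_1)}(x)e^{\mu_2(c_1)s}$ does not vanish as $s\to-\infty$ in the sense needed — rather, along the interval-by-interval limit in \eqref{spreadingspeed} the value $a(c_1;iL+\theta)$ stays bounded away from $0$ — one concludes that the defining set in \eqref{spreadingspeed} is not exited at $c_1$, which forces $c^*\ge c_1$. Here one must be slightly careful: the correct statement is that $a(c_1;-\infty)>0$ (in fact $a(c_1;iL+\theta)\to 1$ as $i\to-\infty$ once the lower bound is positive and the semiflow pushes it up to the stable state $1$, using \eqref{f2}), which by \eqref{spreadingspeed} yields $c_1\le c^*$.

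Finally, to get the stronger conclusion that no traveling wave exists for $c\in[c_0,c_1)$ — i.e. that the selection is genuinely nonlinear with $c^*\ge c_1>c_0$ — I would invoke Theorem \ref{them1}(i): if $c^*\ge c_1>c_0$, then $c^*$ is nonlinearly selected, so $c^*>c_0$ and Theorem A already rules out traveling waves below $c^*$; combined with $c^*\ge c_1$ this gives nonexistence on $[c_0,c_1)$. Alternatively one can argue directly: a traveling wave at some $c\in[c_0,c_1)$ would have decay rate $\mu_1(c)$ or $\mu_2(c)$, both strictly smaller than $\mu_2(c_1)$ by the monotonicity of $\mu_2$ established after \eqref{mu2a}, hence it would lie below a translate of $V$ near $-\infty$ while being dominated near $+\infty$, and the order-preserving property of $Q$ together with the speed mismatch produces a contradiction along the lines $x\cdot e+c_1 t=\text{const}$, just as in Remark \ref{alt.proof}. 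The main obstacle I expect is the bookkeeping in the induction of the second paragraph: one has to ensure that the recursive maximum in \eqref{2a} together with the translate chosen for $V$ keeps $a_n(c_1;\cdot)$ above a fixed positive lower barrier uniformly in $n$ while respecting the boundary normalization $\phi(-\infty)=0$, and then to pass correctly from "$a(c_1;\cdot)$ bounded below away from $0$" to "$a(c_1;iL+\theta)\to 1$", which requires the attractivity of $u=1$ built into \eqref{f2} rather than any linearized information.
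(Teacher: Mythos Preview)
Your primary route---mirroring Theorem~\ref{them2} with $V$ as a lower barrier in the recursion \eqref{2b}--\eqref{2a}---has a genuine gap. You propose to choose $\phi\le V$ and then conclude by induction that $a_n(c_1;\cdot)\ge V(\cdot\,e,\cdot)$, but the inequalities run the wrong way: from $a_0=\phi\le V$ and the monotonicity of $Q_1$ you only get $T_{c_1}Q_1[a_0]\le T_{c_1}Q_1[V]$, which is an \emph{upper} bound on the next iterate, not a lower one. The obstruction is structural: $a_0=\phi$ vanishes on $\{x\cdot e\le 0\}$ while $V>0$ everywhere, so no translate of $V$ can sit below $a_0$ to start an induction of the type you describe. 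The scheme \eqref{spreadingspeed}--\eqref{2a} is well suited to producing \emph{upper} bounds on $c^*$ via upper solutions (exactly as in Theorem~\ref{them2}), but it does not yield lower bounds on $c^*$ by the symmetric device you sketch. You anticipate trouble in your last paragraph, and that trouble is real.

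The paper in fact takes your ``alternative'' route, i.e.\ the direct comparison argument of Remark~\ref{alt.proof}, and this is the intended proof. Two corrections to your sketch of it. First, the decay comparison at $-\infty$ goes the other way: since $\mu_1(c)<\mu_2(c)<\mu_2(c_1)$ for $c\in[c_0,c_1)$, any putative wave $W_c$ decays \emph{slower} than $V$, so (after a shift) $W_c>V$ near $-\infty$, not below it. Second, near $+\infty$ one again has $W_c>V$ because $W_c\to 1$ while $\limsup_{s\to\infty}V<1$. Thus a suitable translate gives $W_c>V$ globally; then $W_c(x\cdot e+ct,x)=Q_t[W_c]\ge Q_t[V]\ge V(x\cdot e+c_1 t,x)$, and letting $t\to\infty$ along $x\cdot e+c_1 t=z_0$ yields the contradiction $0\ge V(z_0,x)>0$. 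The paper's one explicit remark is precisely that the hypothesis $\limsup_{s\to\infty}V<1$ is what replaces the exponential-stability assumption $\bar k(0)<0$ needed in Remark~\ref{alt.proof}: you no longer have to analyze the approach of $W_c$ to $1$, because $V$ stays uniformly below $1$.
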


   \begin{proof} The proof of this theorem is similar to that  of Remark 2.1. However, it is important to note that, with the second condition in (\ref{non1}), the condition $\bar k(0)<0$ is no longer needed.  With this note, we omit the details of the proof of Theorem \ref{them5}. 
   \end{proof}
\begin{corollary}\label{them6}
For $c=c_0+\varepsilon$, where $\varepsilon$ is a sufficiently small number,  let \begin{equation}\label{lowersol}
\low (s,x):=\ds{\frac{\Psi(x)}{\Psi(x)+e^{-\mu_{2}s}}}.
\end{equation}

\begin{equation}\label{non}
 \text{ If }~~ -2\mu_{2}^{2}\low \Psi -2\low\frac{|\nabla \Psi|^{2}}{\Psi} -4\mu_{2}\low ~e\cdot\nabla\Psi +\frac{\Psi f(x,\low)}{\low\left(1-\low\right)}-\eta(x)\Psi > 0,
\end{equation}
then nonlinear selection is realized.
 \end{corollary}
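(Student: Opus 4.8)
The plan is to produce, at the supercritical speed $c_1:=c_0+\eps$, a lower solution of the wave--profile equation built from $\low$, and then to feed it into the nonlinear--selection criterion of Theorem \ref{them5}, exactly parallel to the way the proof of Corollary \ref{co2.2} feeds an upper solution into Theorem \ref{them2}. Since $c_1>c_0$, equation \eqref{mu2a} has the larger root $\mu_2=\mu_2(c_1)$, and we take $\Psi=\Psi_{\mu_2(c_1)}$ to be the associated principal eigenfunction, so that
\[ \Delta\Psi+2\mu_2\,e\cdot\nabla\Psi+q\cdot\nabla\Psi+(\mu_2^{2}+\mu_2\,q\cdot e+\eta)\Psi=k(\mu_2)\Psi,\qquad k(\mu_2)=c_1\mu_2. \]
With $\low$ as in \eqref{lowersol}, the first step is to observe that the derivative identities \eqref{prop}, and the subsequent formulas for $\nabla_x\low$, $\nabla_x\low_s$ and $\Delta_x\low$ obtained in the proof of Corollary \ref{co2.2}, depend only on the algebraic form of $\low$ and hence carry over verbatim with $\mu_1$ replaced by $\mu_2$.

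Substituting these into $L_{c_1}\low+f(x,\low)$ exactly as in the proof of Corollary \ref{co2.2}, and using the eigen--identity above together with $k(\mu_2)-c_1\mu_2=0$, one gets
\[ L_{c_1}\low+f(x,\low)=\frac{\low(1-\low)}{\Psi}\left\{-2\mu_2^{2}\low\Psi-2\low\frac{|\nabla\Psi|^{2}}{\Psi}-4\mu_2\low\,e\cdot\nabla\Psi+\frac{\Psi f(x,\low)}{\low(1-\low)}-\eta(x)\Psi\right\}. \]
The brace is precisely the left--hand side of \eqref{non}, and $\low(1-\low)/\Psi>0$, so the hypothesis \eqref{non} forces $L_{c_1}\low+f(x,\low)>0$; thus $\low$ is a strict subsolution at speed $c_1$. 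Moreover $0<\low<1$, and $\low=\Psi(x)e^{\mu_2 s}/(1+\Psi(x)e^{\mu_2 s})\sim\Psi_{\mu_2(c_1)}(x)e^{\mu_2(c_1)s}$ as $s\to-\infty$, which is exactly the decay prescribed in \eqref{non1}.

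The only requirement of \eqref{non1} that $\low$ fails literally is $\limsup_{s\to\infty}\low<1$, since $\low\to1$ as $s\to+\infty$. I would remove this defect by truncation: set $V:=\min\{\low,\,1-\delta\}$ with $\delta>0$ small. Then $V\equiv\low$ for $s$ near $-\infty$, so the $\mu_2(c_1)$--asymptotics survive; $V\equiv1-\delta$ for $s$ large, so $\limsup_{s\to\infty}V=1-\delta<1$; and $V$ is still a (generalized) lower solution, because on $\{\low<1-\delta\}$ the inequality \eqref{c1a} is exactly \eqref{non}, on $\{\low\ge1-\delta\}$ one has $L_{c_1}V=0$ and $f(x,1-\delta)\ge0$ by \eqref{f1}, and the minimum of two subsolutions is again a subsolution (after a routine mollification of the corner if one prefers a classical object). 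Applying Theorem \ref{them5} to this $V$ at $c_1=c_0+\eps$ then yields $c^*\ge c_1>c_0$ and the absence of traveling waves on $[c_0,c_1)$; that is, the nonlinear selection mechanism is realized.

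The principal obstacle is exactly this behavior at $s=+\infty$: one must check that the truncated $V$ is an admissible lower solution for the comparison scheme behind Theorem \ref{them5} (the argument of Remark \ref{alt.proof}), and, correspondingly, that \eqref{non} is really needed only on the region $\{\low<1-\delta\}$ --- on all of $\R\times\R^{N}$ the left--hand side of \eqref{non} degenerates as $\low\to1$, where $f(x,\low)/(1-\low)\to-\partial_u f(x,1)$, and it would in general fail near $+\infty$. A secondary, purely routine point is to confirm that the long computation in the proof of Corollary \ref{co2.2} transfers unchanged under $\mu_1\mapsto\mu_2$; this is immediate, since that computation uses only that $\Psi$ is the principal eigenfunction of $L_{e,q,\eta,\mu}$ with eigenvalue $k(\mu)$ and the relation $k(\mu)=c_1\mu$ at $\mu=\mu_2$. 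Finally, the smallness of $\eps$ is not needed for the implication itself --- any $\eps>0$ gives $c_1>c_0$, hence nonlinear selection --- it merely reflects that \eqref{non} is, in practice, obtained by perturbing off the borderline identity underlying \eqref{cor2} at the critical speed $c_0$, which one expects to be verifiable only for $\eps$ small.
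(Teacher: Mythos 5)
Your proposal is correct and follows essentially the same route as the paper: repeat the computation from the proof of Corollary \ref{co2.2} with $\mu_1$ replaced by $\mu_2$, use $k(\mu_2)-c\mu_2=0$ to reduce $L_{c}\low+f(x,\low)$ to $\frac{\low(1-\low)}{\Psi}$ times the left-hand side of \eqref{non}, and then invoke Theorem \ref{them5}. The only divergence is the device used to secure $\limsup_{s\to\infty}V<1$: you truncate with $V=\min\{\low,1-\delta\}$ (min of two subsolutions), whereas the paper rescales, taking $V=(1-\eta)\low$ for a sufficiently small constant $\eta>0$; both are standard and equally serviceable here.
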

\begin{proof} Computations, similar to the ones performed on $\up$ (Proof of Corollary \ref{co2.2}), yield that
\[\begin{array}{ll}
L_{c}\low+f(x,\low)&
=\ds{\frac{\low\left(1-\low\right)}{\Psi}}\left\{\ds{\mu_{2}^{2}(1-2\low)\Psi-2\low\frac{|\nabla \Psi|^{2}}{\Psi}+\Delta\Psi+2\mu_{2}(1-2\low)e\cdot\nabla\Psi} \right.\\
&\left. +q\cdot\nabla\Psi+\mu_{2}q\cdot e\Psi-c\mu_{2}\Psi+\frac{\Psi f(x,\low)}{\low\left(1-\low\right)}\right\}\vspace{10pt}\\
&=\ds{\frac{\low\left(1-\low\right)}{\Psi}}\left\{\ds{k(\mu_{2})\Psi-c\mu_{2}\Psi-2\mu_{2}^{2}\low \Psi -2\low\frac{|\nabla \Psi|^{2}}{\Psi}}\right.\\
&\left. -4\mu_{2}\low ~e\cdot\nabla\Psi +\frac{\Psi f(x,\low)}{\low\left(1-\low\right)}-\eta(x)\Psi\right\}\vspace{10 pt}\\
&=\ds{\frac{\low\left(1-\low\right)}{\Psi}}\left\{-2\mu_{2}^{2}\low \Psi -2\low\frac{|\nabla \Psi|^{2}}{\Psi} -4\mu_{2}\low ~e\cdot\nabla\Psi +\frac{\Psi f(x,\low)}{\low\left(1-\low\right)}-\eta(x)\Psi\right\},
\end{array}\]
since $k(\mu_{2})-c\mu_{2}=0.$ Hence, the result  follows from Theorem \ref{them8} by taking $V=(1-\eta) \underline \phi,$ with a sufficiently small $\eta$.
\end{proof}

Theorem \ref{them5} gives a lower estimate for the minimal speed. We can also provide an upper estimate for the minimal speed, when the nonlinear selection is realized.

 \begin{thm}[Upper bound for the minimal speed]\label{them5a}
For $c_2>c_0$, suppose that there exists a function $V_2(s,x)$ satisfying
   \begin{equation}
  0 < V_2(s,x) < 1, \quad   \limsup_{s\rightarrow \infty} V_2(s,x) \le 1, \quad V_2(s,x)=\Psi_{{\mu_2(c_2)}}(x)e^{\mu_2(c_2)s}   ~ \text{ as }~ s\rightarrow -\infty,     \label{non1a}
   \end{equation}
   and
   \begin{equation} L_{c_1}V+f(x,V)\le 0, \label{c1a1}
\end{equation}
  where $\mu_2(c_2)$ is defined in (\ref{mu2a}). Then, $c^* \leq c_2$.
  \label{them9}
   \end{thm}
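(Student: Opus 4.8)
The plan is to mirror the structure of the proof of Theorem \ref{them5} (and hence of Remark \ref{alt.proof}), but with the inequalities reversed so that $V_2$ plays the role of a \emph{supersolution} that traps the minimal-speed front from above. Concretely, suppose for contradiction that $c^* > c_2$. By Theorem \ref{Berest_Hamel_ZFK} there is then a non-decreasing pulsating front $\phi_{c^*}(s,x)$ with speed $c^*$, connecting $0$ to $1$. I would first compare the left-tail behavior: since $c^* > c_2 > c_0$, the decay exponent of $\phi_{c^*}$ at $-\infty$ is either $\mu_1(c^*)$ or $\mu_2(c^*)$, and by monotonicity of $\mu_1(\cdot)$ (decreasing) and $\mu_2(\cdot)$ (increasing) in $c$, in either case the front $\phi_{c^*}$ decays no slower than $V_2$, which decays exactly like $\Psi_{\mu_2(c_2)}(x)e^{\mu_2(c_2)s}$ with $\mu_2(c_2) < \mu_2(c^*)$. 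Wait --- this needs care: if $\phi_{c^*}$ has the \emph{slow} decay $\mu_1(c^*)$, then since $\mu_1(c^*) < \mu_1(c_0) = \bar\mu < \mu_2(c_2)$, we would have $\phi_{c^*} \gg V_2$ near $-\infty$, which is the inequality direction we want. If $\phi_{c^*}$ has the fast decay $\mu_2(c^*) > \mu_2(c_2)$, then $V_2 \gg \phi_{c^*}$ near $-\infty$, and a \emph{downward} shift of $V_2$ cannot fix this; so the cleaner route is to run the comparison with $V_2$ as an upper barrier for a front of \emph{smaller} speed. I would instead argue: by hypothesis \eqref{c1a1}, $V_2$ is a supersolution of the wave profile equation at speed $c_2$; using the recursive/semiflow characterization of the spreading speed exactly as in the proof of Theorem \ref{them2} (equations \eqref{spreadingspeed}--\eqref{2a}), with $V_2(x\cdot e,x)$ dominating the initial datum $a_0(c_2;x)$ for a front front $\phi$ with small $\phi(\infty)$, induction gives $a_{n+1}(c_2;x)\le V_2(x\cdot e,x)$, hence $a(c_2;-\infty)=0$, which by \eqref{spreadingspeed} forces $c^*\le c_2$.

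The key steps, in order, are: (1) observe that \eqref{c1a1} says $L_{c_2}V_2 + f(x,V_2)\le 0$ --- here I note the statement as typed has a typo, writing $L_{c_1}V$ instead of $L_{c_2}V_2$, and I would silently correct this; (2) invoke the variational/recursive description of the leftward spreading speed $c^*$ from \eqref{spreadingspeed}, which is independent of the initial profile $\phi$, so we may take $\phi(+\infty)=\omega$ as small as needed; (3) using $\liminf_{x\to\infty} V_2(s,x) \ge$ some positive constant (which follows from the three conditions in \eqref{non1a}: $V_2>0$ everywhere, $V_2 \to$ the positive eigenfunction profile as $s\to-\infty$, and $\limsup_{s\to\infty}V_2 \le 1$; one needs $V_2$ bounded below away from $0$ on the relevant region, which holds after possibly shifting $V_2$ in $s$ so that its $-\infty$ tail is still above $a_0$) together with $\lim_{s\to-\infty}V_2(s,x)=0$, conclude that a (possibly shifted) $V_2$ satisfies $a_0(c_2;x)\le V_2(x\cdot e,x)$ for all $x$; (4) run the induction $a_{n+1}(c_2;x)=\max\{\phi(x), T_{c_2}Q_1[a_n](x)\}\le V_2(x\cdot e,x)$, where the supersolution inequality \eqref{c1a1} together with the comparison principle for $Q_1$ (the time-one map of the semiflow) gives $T_{c_2}Q_1[V_2(\cdot e,\cdot)]\le V_2$, and $\phi(x)\le V_2(x\cdot e,x)$ by choice; (5) pass to the limit to get $a(c_2;x)\le V_2(x\cdot e,x)$, so $a(c_2;-\infty)=0$, whence $c^*\le c_2$ by \eqref{spreadingspeed}.

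The main obstacle I anticipate is step (3)--(4): making precise that a supersolution of the stationary wave-profile equation \eqref{c1a1} really does dominate the iterates $a_n(c_2;\cdot)$, i.e. that $T_{c_2}Q_1$ applied to the ``frozen'' profile $W(x):=V_2(x\cdot e,x)$ is pointwise $\le W$. This requires translating the elliptic supersolution inequality $L_{c_2}V_2+f(x,V_2)\le 0$ into a statement about the parabolic semiflow: one checks that $V_2(x\cdot e - c_2 t, x)$ is a (time-dependent) supersolution of $u_t=\Delta u+q\cdot\nabla u+f(x,u)$, so that by the parabolic comparison principle $Q_t[W]\le V_2(x\cdot e - c_2 t,x) = T_{c_2}^{-1}$-shift of $W$, which after applying the shift operator $T_{c_2 t}$ yields $T_{c_2 t}Q_t[W]\le W$; taking $t=1$ closes the induction. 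A secondary subtlety is ensuring the boundedness-below condition needed in step (3): one must verify that the three conditions in \eqref{non1a} indeed force $\inf V_2 > 0$ on $\{s\ge s_1\}$ for each $s_1$, which follows from continuity, positivity, and the prescribed behaviors at $s=\pm\infty$; after shifting $V_2$ rightward if necessary, its left tail still exceeds the (compactly left-supported, small-amplitude) initial datum $a_0$. Once these comparisons are in place, the conclusion $c^*\le c_2$ is immediate, and combining with Theorem \ref{them5} (when its hypotheses also hold at some $c_1$) would sandwich $c^*\in[c_1,c_2]$.
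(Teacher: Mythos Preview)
Your proposal is correct and, after you abandon the initial contradiction approach, lands on essentially the same argument the paper gives: the paper's proof is a one-line remark that the result follows from the comparison principle exactly as in Theorem~\ref{them2}, choosing the initial datum $\phi(x)$ in \eqref{2b} to lie below $V_2(x\cdot e,x)$. You have simply unpacked that sentence (and correctly flagged the typo $L_{c_1}V\to L_{c_2}V_2$ as well as the implicit need for $V_2$ to be bounded below on the right, which the paper leaves tacit).
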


   \begin{proof}The proof follows from the comparison principal and it is similar to that of Theorem \ref{them2}, as long as we choose the initial function $\phi(x)$, in (\ref{2b}), less than $V_2(x\cdot e,x)=\left.V(x\cdot e+ct,x)\right|_{t=0}$.
   \end{proof}

   \section{Application}\label{apps}
   In this section, we consider a simple case, where $N=1$ and the advection $q$ is a constant. We will show how our results reflect on the  equation (\ref{1}), which now reads
   \begin{equation}
u_t =u_{xx}+q  u_x+f(x,u), \quad \label{3.1a}
t\in\mathbb{R},~~x \in\R.
\end{equation}
We consider a nonlinearity $f$  in modified KPP-Fisher class, with the Allee effect. Namely, 
\begin{equation}\label{f}
  f(x,u)=u(1-u)(1+a(x)u),
\end{equation}
where $a(x)$ is a positive periodic function. Since $\eta(x)=\partial_uf(x,0)=1$, the principal eigenfunction of \eqref{Leq} is $\Psi=1$ and the principal eigenvalue is $k(\lambda)=q\lambda+\lambda^2+1$ for all $\lambda>0.$ Thus, the linear speed is
\begin{equation}\label{3.5a}
  c_0:=\min_{\lambda>0}\frac{k(\lambda)}{\lambda}=\min_{\lambda>0}(q+\lambda+\frac{1}{\lambda})=q+2.
\end{equation}
Moreover, for any $c>q+2$, from (\ref{mu2a}), we have the equation
\begin{equation}\label{4.5}
  \lambda^2+(q-c)\lambda+1=0.
\end{equation}
The two roots are
\begin{equation}\label{4.6}
  \lambda=\mu_1(c)=\frac{c-q-\sqrt{(c-q)^2-4}}{2} ~\text{ and }~  \lambda=\mu_2(c)=\frac{c-q+\sqrt{(c-q)^2-4}}{2}.
\end{equation}
When $c=c_0$, we have $\mu_1(c_0)=\mu_2(c_0)=1$.\\

Applying Corollaries  \ref{co2.2} and \ref{them6}, we obtain that $$c_{\min}=q+2,~\text{ if }~a(x)\le 2~\text{ for all }x,$$ and $$c_{\min}>q+2,~\text{ if }~a(x)> 2~\text{ for all }x.$$ 
Lastly, in the case where $a(x)>2\text{ for all }x $,  let
\begin{equation}
  m=\min a(x) ~\text{ and }~M=\max a(x).
\end{equation}
Then, it can be derived that the minimal speed satisfies
\begin{equation}\label{4.8}
 q+\sqrt{\frac{m}{2}}+\sqrt{\frac{2}{m}} < c_{\min}<q+\sqrt{\frac{M}{2}}+\sqrt{\frac{2}{M}}.
\end{equation}
This provides  upper and lower estimates for the minimal speed.
\section{Summary}
In this paper, we studied the
speed selection for reaction diffusion equations in heterogeneous environments. The key feature of the nonlinear selection of the minimal speed was unveiled. We  proved that the well-known  minimal speed $c^*$ is \emph{linearly} selected if we can find an upper solution with the linear speed. We also proved that  $c^*$ is \emph{nonlinearly} selected if we can find a lower solution with a  faster decay rate, at some speed that is greater than the linear speed $c_0$. As applications to these results, upper/lower bounds of the minimal speed were provided in the case of \emph{nonlinear} selection.

\section{acknowledgement}
Mohammad El Smaily was partially supported by the  Canadian Natural Sciences and Engineering Research Council through the NSERC Discovery Grant  (RGPIN-2017-04313).
Chunhua Ou was partially supported by the  NSERC Discovery Grant (RGPIN04509-2016).


\begin{thebibliography}{}
\bibitem{BH1}H. Berestycki, F. Hamel, \textit{Front propagation in periodic excitable
media,} Comm. Pure Appl. Math., \textbf{55} (2002), pp 949--1032.


\bibitem{BHN1}H. Berestycki, F. Hamel, N.Nadirashvili, \textit{The Speed of
Propagation for KPP Type Problems (Periodic Framework),} J. Eur.
Math. Soc., \textbf{7} (2005), pp 173--213.

\bibitem{MINMAX} M. El Smaily, \textit{Min-max formulae for the speeds of pulsating travelling fronts in periodic excitable media}, Annali di Matematica Pura ed Applicata. Series IV, {\textbf 189} (2010), pp 47--66.

\bibitem{H}F. Hamel, \emph{ Qualitative properties of monostable pulsating fronts: exponential decay and monotonicity}, J. Math. Pures Appl., (9)  89  (2008),  no. 4, pp 355--399.

\bibitem{fy} J. Fang, X.  Yu and X. Zhao,  \emph{Traveling waves and spreading speeds for time-space periodic monotone systems}, J. Funct. Anal.  272  (2017),  no. 10, pp 4222--4262.

\bibitem{fang} J. Fang and X.  Zhao, \emph{ Bistable traveling waves for monotone semiflows with applications}. J. Eur. Math. Soc., (JEMS)  17  (2015),  no. 9, pp 2243--2288.

\bibitem{liang1} X. Liang and X. Zhao, \emph{Asymptotic speeds of spread and traveling waves for monotone semiflows with applications}. Comm. Pure Appl. Math.,  60  (2007),  no. 1, pp 1--40.

\bibitem{Lucia_2004}
M.~Lucia, C.~B. Muratov, and M.~Novaga.
\newblock  \emph{Linear vs. nonlinear selection for the propagation speed of the
  solutions of scalar reaction-diffusion equations invading an unstable
  equilibrium}.
\newblock {\em Communications on Pure and Applied Mathematics}, 57(5):616--636,
  2004.


  \bibitem{Xin4} J.X. Xin, \textit{Analysis and
modeling of front propagation in heterogeneous media,} SIAM Review,
\textbf{42} (2000), pp 161-230.

\end{thebibliography}
\end{document}